\theoremstyle{plain}
\newtheorem{thm}{Theorem}[section]
\newtheorem{lm}[thm]{Lemma}
\theoremstyle{definition}
\newtheorem{alg}[thm]{Algorithm}
\newtheorem{de}[thm]{Definition}
\newtheorem{ex}[thm]{Example}
\newtheorem{re}[thm]{Remark}
\newcommand{\RR}{{\mathbb R}}
\newcommand{\QQ}{{\mathbb Q}}
\newcommand{\ZZ}{{\mathbb Z}}
\newcommand{\NN}{{\mathbb N}}
\newcommand{\FF}{{\mathbb F}}
\newcommand{\lmon}{\mathrm{lm}}
\newcommand{\lc}{\mathrm{lc}}
\newcommand{\lt}{\mathrm{lt}}
\newcommand{\im}{\operatorname{im}}
\newcommand{\comment}[1]{}
{\begin{figure} \begin{center}}%
{\end{center} \end{figure}}
\def\sgn{{\rm sgn}\,}
\newcommand{\Sym}{\operatorname{Sym}}
\newcommand{\la}{\langle}
\newcommand{\ra}{\rangle}
\newcommand{\lcm}{\operatorname{lcm}}
\newcommand{\Mon}{\mathrm{Mon}}
\newcommand{\Inc}{\mathrm{Inc}(\NN)}
\newcommand{\mS}{\mathcal{S}}
\newcommand{\li}[1]{l(#1)}
\begin{document}

\begin{abstract}
Exploiting symmetry in Gr\"obner basis computations is difficult when the
symmetry takes the form of a group acting by automorphisms on monomials in
finitely many variables. This is largely due to the fact that the group
elements, being invertible, cannot preserve a term order. By contrast,
inspired by work of Aschenbrenner and Hillar, we introduce the concept
of {\em equivariant Gr\"obner basis} in a setting where a {\em monoid}
acts by {\em homomorphisms} on monomials in potentially {\em infinitely}
many variables. We require that the action be compatible with a term
order, and under some further assumptions derive a Buchberger-type
algorithm for computing equivariant Gr\"obner bases.

Using this algorithm and the monoid of strictly increasing functions $\NN
\to \NN$ we prove that the kernel of the ring homomorphism 
\[ \RR[y_{ij} \mid i,j \in \NN, i > j] \to \RR[s_i,t_i \mid i \in \NN],\ 
y_{ij} \mapsto s_is_j + t_it_j \] 
is generated by two types of polynomials:
{\em off-diagonal $3 \times 3$-minors} and {\em pentads}. This confirms a
conjecture by Drton, Sturmfels, and Sullivant on the Gaussian two-factor
model from algebraic statistics.
\end{abstract}

\title{Equivariant Gr\"obner bases and the Gaussian two-factor model}
\author[A.~E.~Brouwer]{Andries E. Brouwer}
\address[Andries E. Brouwer]{
Department of Mathematics and Computer Science\\
Technische Universiteit Eindhoven\\
P.O. Box 513, 5600 MB Eindhoven, The Netherlands\\
}
\email{aeb@cwi.nl}

\author[J.~Draisma]{Jan Draisma}
\address[Jan Draisma]{
Department of Mathematics and Computer Science\\
Technische Universiteit Eindhoven\\
P.O. Box 513, 5600 MB Eindhoven, The Netherlands\\
and Centrum voor Wiskunde en Informatica, Amsterdam,
The Netherlands}
\thanks{The second author is supported by DIAMANT, an NWO
mathematics cluster.}
\email{j.draisma@tue.nl}
\subjclass[2000]{13P10, 16W22 (Primary); 62H25 (Secondary)}
\keywords{equivariant Gr\"obner bases, algebraic factor analysis}

\maketitle

\section{Introduction and results}

\subsection*{Equivariant Gr\"obner bases}
Algebraic varieties arising from applications often have many
symmetries. When analysing such varieties with tools from computational
algebra, it is desirable to do so in an {\em equivariant}
manner, that is, while keeping track of those symmetries, and if possible 
exploiting them. The notion of {\em Gr\"obner basis}, which
lies at the heart of computational algebra, depends heavily
on choices of {\em coordinates} and of a {\em term order}, which is a
well-order on monomials in the coordinates. It is therefore natural,
at least from a computational point of view, to study symmetries of
ideals that preserve both the coordinates and the term order. Now
the term {\em symmetry} is usually reserved for certain invertible
maps, but it is easy to see that an invertible map cannot preserve a
well-order; see Remark~\ref{re:WhyMonoid}. Hence we are led to relax
the condition that symmetries be invertible. On the other hand, if
a non-invertible map is to preserve the restriction of the term order
to the set of coordinates, then that set better be infinite, in
contrast with the usual set-up in computational commutative algebra.

In fact, there is
another, more compelling reason for allowing infinitely many
variables: many varieties from applications come in infinite families,
and it is convenient to pass to a suitable limit. For example, the
variety of symmetric $n \times n$-matrices of rank $2$ has a well-defined
projective limit for $n$ tending to infinity, and so does the closely
related two-factor model that we study in this paper. In both cases, the
limit is not only stable under the union of all symmetric groups $S_n$
simultaneously permuting rows and columns, but also under the {\em
monoid} $\Inc$ of all strictly increasing maps from $\NN$ to itself.
And while the union of the symmetric groups does not preserve any term
order, the monoid $\Inc$ does preserve such an order; this fundamental
observation allows us to do computations in Section~\ref{sec:2f}.

This discussion leads to the following set-up, which we believe will
have applications to numerous other problems. Let $X$ be a potentially
infinite set, whose elements we call {\em variables}. The free commutative
monoid generated by $X$ is denoted $\Mon$; its elements are called {\em
monomials}. Suppose that we have
\begin{enumerate}
\item[EGB1.] a term order, i.e., a well-order $\leq$ on $\Mon$ such
that $m \leq m' \Rightarrow mm'' \leq m'm''$ for all $m,m',m'' \in \Mon$; and
\item[EGB2.] a monoid $G$, i.e., a (typically non-commutative) semigroup
with identity, acting on $\Mon$ by means of monoid
homomorphisms $\Mon \to \Mon$ preserving the strict order: $\pi 1=1,\
\pi(mm')=(\pi m)(\pi m'),$ and
$m < m' \Rightarrow \pi m< \pi m'$ for all $\pi  \in G,\ m,m' \in \Mon$.
\end{enumerate}

\begin{ex} \label{ex:AH}
The setting that Aschenbrenner and Hillar study in~\cite{Aschenbrenner07}
fits into this framework, and indeed inspired our set-up. There
$X=\{x_1,x_2,\ldots\}$ and $G$ is the monoid $\Inc$ of all increasing maps
$\pi: \NN \to \NN$ acting on $X$ by $\pi x_i=x_{\pi(i)}$ and on $\Mon$
by multiplicativity. As a term order one can choose the lexicographic
order with $x_i>x_j$ if $i>j$.  Aschenbrenner and Hillar have 
turned their proof of finite generation of $\Inc$-stable ideals in
$K[x_1,x_2,\ldots]$ into an algorithm; see~\cite{Aschenbrenner08}.
\end{ex}

Let $K$ be a field and let $K[X]=K\Mon$ be the polynomial $K$-algebra
in the variables $X$, or, equivalently, the monoid $K$-algebra of
$\Mon$. Then $G$ acts naturally on $K[X]$ by means of homomorphisms. A
{\em $G$-orbit} is a set of the form $Gz=\{\pi z \mid \pi  \in G\}$,
where $z$ is in a set on which $G$ acts. The fact that $G$ acts by monoid
homomorphisms on $\Mon$ implies that the ideal generated by a union of
$G$-orbits in $K[X]$ is automatically {\em $G$-stable}, that is, closed
under the action of $G$.

We use the notation $\lmon(f)$ for the {\em leading monomial} of $f$,
i.e., the $\leq$-largest monomial having non-zero coefficient in $f$.
By the requirement that $G$ preserve the order, we have $\lmon(\pi
f)=\pi\,\lmon(f)$. Given an ideal $I$ of $K[X]$, $\lmon(I)$ is an ideal
in the monoid $\Mon$, that is, $\lmon(I)$ is closed under
multiplication with any element from $\Mon$. If $I$ is $G$-stable,
then so is $\lmon(I)$.

\begin{de}[Equivariant Gr\"obner basis]
Let $I$ be a $G$-stable ideal in $K[X]$. A {\em $G$-Gr\"obner basis}
of $I$ is a subset $B$ of $I$ for which $\lmon(GB)(=\{\lmon(\pi
b) \mid b \in B,\ \pi  \in G\})$ generates the ideal $\lmon(I)$ in
$\Mon$. If $G$ is fixed in the context, then we  also call $B$ an {\em
equivariant Gr\"obner basis}. If $G=\{1\}$, then we call $B$ an {\em
ordinary Gr\"obner basis}. 
\end{de}

\begin{re}
At MEGA 2009, Viktor Levandovskyy pointed out to the second author that
our equivariant Gr\"obner bases are a special case of Gr\"obner
$S$-bases in the sense of~\cite{Drensky06}, which were invented for
analysing certain two-sided ideals in free associative algebras. The
focus of the present article is on getting exactly the right set-up for
doing machine computations of equivariant Gr\"obner bases in the
commutative setting.
\end{re}

It is easy to see that if $B$ is a $G$-Gr\"obner basis of $I$, then $GB$
generates $I$ as an ideal; see Lemma~\ref{lm:GBgenerates}.

\begin{ex} \label{ex:Rankk}
Let $X=\{y_{ij} \mid i,j \in \NN\}$, let $k$ be a natural number, and let
$I$ be the ideal of all polynomials in the $y_{ij}$ that vanish on all
$\NN \times \NN$-matrices $y$ of rank at most $k$. Order the variables
$y_{ij}$ lexicographically by the pair $(i,j)$, where $i$ is the most
significant index; so for instance $y_{3,5}>y_{2,6}>y_{2,4}>y_{1,10}$. The
corresponding lexicographic order on monomials in the $y_{ij}$ is a
well-order. Let $G:=\Inc \times \Inc$ act on $X$ by
$(\pi,\sigma)y_{ij}=y_{\pi(i),\sigma(j)};$
this action preserves the strict order. The $G$-orbit of the
determinant $D$ of the matrix $(y_{ij})_{i,j=1,\ldots,k+1}$ consists of
all $(k+1)\times(k+1)$-minors of $y$, which by the results
of~\cite{Sturmfels90} form a
Gr\"obner basis of the ideal $I$. As a consequence, $\{D\}$
is a $G$-Gr\"obner basis of $I$. 
\end{ex}

A $G$-stable ideal need not have a finite $G$-Gr\"obner basis. Indeed,
if one requires that {\em every} $G$-stable ideal $I$ in $K[X]$ has a
finite $G$-Gr\"obner basis, then this must in particular be true for
{\em monomial} ideals. This implies that $\Mon$ does not have infinite
antichains relative to the {\em partial order} on $\Mon$ defined by
$m \preceq m':\Leftrightarrow \exists \pi \in G: \pi m | m'$. Observe
that this is, indeed, a partial order: transitivity is straightforward,
and antisymmetry follows from the fact that $\pi m|m \Rightarrow \pi m
\leq m$, while on the other hand $\pi m \geq m$ for all $\pi,m$; see
Remark~\ref{re:WhyMonoid}. Conversely, if $(\Mon, \preceq)$ does not
have infinite anti-chains, then every ideal has a finite $G$-Gr\"obner
basis. This is the case in the set-up of Example~\ref{ex:AH}, which is
generalised in~\cite{Hillar09}; there equivariant Gr\"obner bases are
called {\em monoidal Gr\"obner bases}.

\begin{re}
We have not yet really used that $\Mon$ is the free commutative monoid
generated by
$X$. So far, we could have taken $\Mon$ any commutative monoid equipped
with EGB1 and EGB2. This viewpoint, and a generalisation thereof,
is adopted in~\cite{Hillar09}. However, for doing computations we need that
$\Mon$ has more structure; see conditions EGB3 and EGB4 below. This
is why we have restricted ourselves to free monoids $\Mon$. 
\end{re}

In the polynomial ring of Example~\ref{ex:Rankk} the set
$\{y_{12}y_{21},y_{12}y_{23}y_{31},y_{12}y_{23}y_{34}y_{41},\ldots\}$
is an infinite $\preceq$-antichain of monomials, hence the $\Inc$-stable
ideal generated by it does not have a finite $\Inc$-Gr\"obner basis.
But even in such a setting where not {\em all} $G$-stable ideals have
finite $G$-Gr\"obner bases, ideals of interesting $G$-stable varieties
may still have such bases. We will derive an algorithm for computing
equivariant Gr\"obner bases under the following two additional
assumptions:

\begin{enumerate}
\item[EGB3.] for all $\pi \in G$ and $m,m' \in \Mon$ we have $\lcm(\pi
m,\pi m')=\pi \lcm(m,m')$; and
\item[EGB4.] for all $f,h \in K[X]$ the set $Gf \times Gh$ is the union
of a finite number of $G$-orbits (where $G$ acts diagonally on $K[X]
\times K[X]$), and generators of these orbits can be computed
effectively.
\end{enumerate}

Note that EGB3 is automatically satisfied if $G$ stabilises the set $X$
of variables. Although this is the only setting that we will need for
the application to the two-factor model, future applications may need
the greater generality where $X$ is not $G$-stable. Condition EGB4 is of
computational importance, as will become clear in Section~\ref{sec:GGB}.
There we also show in Examples~\ref{ex:EGB3} and \ref{ex:EGB4} that
these requirements are not redundant.

\begin{thm} \label{thm:GGB}
Under conditions EGB1, EGB2, EGB3, and EGB4 there exists an algorithm
that takes a finite subset $B$ of $K[X]$ as input and that returns a
finite $G$-Gr\"obner basis of the ideal generated by $B$, {\em provided
that it terminates}.
\end{thm}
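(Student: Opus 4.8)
The plan is to adapt Buchberger's algorithm to the equivariant setting, replacing ordinary $S$-polynomials by \emph{$G$-equivariant $S$-polynomials} and ordinary reduction by \emph{$G$-reduction}. First I would define, for $f,h \in K[X]$, a set of $S$-polynomials: using condition EGB4, write $Gf \times Gh$ as a finite union of $G$-orbits with computable generators $(f_k, h_k)$; for each generator form the ordinary $S$-polynomial $S(f_k, h_k) = \frac{\lcm(\lmon f_k, \lmon h_k)}{\lt(f_k)} f_k - \frac{\lcm(\lmon f_k, \lmon h_k)}{\lt(h_k)} h_k$. Next I would define $G$-reduction of a polynomial $g$ modulo a set $B$: repeatedly, if some monomial $m$ of $g$ is divisible by $\pi\,\lmon(b)$ for some $b \in B$ and $\pi \in G$, subtract an appropriate scalar multiple of $\frac{m}{\pi\,\lmon(b)}\,\pi b$ to cancel that term. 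Here EGB2 guarantees $\lmon(\pi b) = \pi\,\lmon(b)$, and EGB3 guarantees the $\lcm$ in the $S$-polynomial computation is compatible with the $G$-action — without EGB3 the finitely many orbit generators from EGB4 would not suffice to capture all $S$-pairs.

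The algorithm then reads: start with $B$; as long as there exist $b, b' \in B$ (not necessarily distinct) and an orbit generator $(f_k, h_k)$ of $Gb \times Gb'$ whose $S$-polynomial $G$-reduces modulo $B$ to a nonzero remainder $r$, add $r$ to $B$; when no such $r$ exists, output $B$. The correctness argument is an equivariant version of Buchberger's criterion: I would prove that if every equivariant $S$-polynomial $G$-reduces to zero modulo $B$, then $B$ is a $G$-Gr\"obner basis. The key lemma is that $\lmon(GB)$ generates $\lmon(I)$ in $\Mon$, where $I$ is the ideal generated by $GB$ (which equals the ideal generated by $B$ since $B \subseteq I$ and $I$ is $G$-stable). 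One takes $f \in I$, writes $f = \sum_j c_j \pi_j b_j$ with $\pi_j \in G$, $b_j \in B$, chooses such a representation minimising the largest monomial $M := \max_j \pi_j\,\lmon(b_j)$, and shows that if $\lmon(f) < M$ then a cancellation of leading terms occurs among a subset of indices achieving $M$; expressing this cancellation via $S$-polynomials of \emph{pairs} $(\pi_j b_j, \pi_{j'} b_{j'})$, and using that each such pair lies in $G \cdot (f_k, h_k)$ for one of the finitely many orbit generators (so its $S$-polynomial is $\pi$ applied to $S(f_k,h_k)$ for suitable $\pi$, which $G$-reduces to zero by hypothesis and by $G$-equivariance of reduction), one rewrites $f$ with a strictly smaller $M$, a contradiction. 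Hence $\lmon(f) = M \in \langle \lmon(GB) \rangle$.

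The main obstacle is the bookkeeping in this equivariant Buchberger criterion: one must verify that $G$-reduction behaves well under the $G$-action — i.e. if $S(f_k,h_k)$ $G$-reduces to $0$ modulo $B$, then $\pi\,S(f_k,h_k)$ also $G$-reduces to $0$ modulo $B$ — which uses EGB2 (so that $\pi\,\lmon(b) = \lmon(\pi b)$ and divisibility is preserved under $\pi$) together with EGB3 (so that the $\lcm$-based cofactors transform correctly). One must also check the standard fact that a syzygy among leading terms that all equal $M$ is a $K$-linear combination of ``elementary'' syzygies corresponding to pairs, and that each pair $(\pi_j b_j, \pi_{j'} b_{j'})$ with a common leading monomial multiple does arise from one of the EGB4 orbit generators — this is exactly where the finiteness in EGB4 is essential, since without it the rewriting step could not be carried out effectively. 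Termination is explicitly \emph{not} claimed (the theorem is conditional on it), so I would not need to address the ascending chain of leading-monomial ideals; the entire content is the partial-correctness statement, for which the argument above suffices once the two compatibility checks are dispatched.
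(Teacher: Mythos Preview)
Your proposal is correct and follows essentially the same approach as the paper: define equivariant $S$-polynomials via EGB4, define $G$-reduction, and prove an equivariant Buchberger criterion whose key step is that EGB3 forces $S(\pi\sigma_0 b_0,\pi\sigma_1 b_1)=\pi\,S(\sigma_0 b_0,\sigma_1 b_1)$, so reductions to zero transport along $G$. The paper organises this slightly differently---it first proves the \emph{ordinary} Buchberger criterion (for $G=\{1\}$, infinitely many variables) and then shows that $GB$ is an ordinary Gr\"obner basis, rather than running the minimisation argument directly in the equivariant setting as you do; both work. One small slip: your parenthetical ``which equals the ideal generated by $B$ since $B \subseteq I$ and $I$ is $G$-stable'' is not right---the ordinary ideal generated by $B$ need not be $G$-stable, and the algorithm computes a $G$-Gr\"obner basis of the ideal generated by $GB$, not by $B$.
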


In Section~\ref{sec:GGB} we derive this algorithm, and in
Section~\ref{sec:2f} we apply it to a conjecture concerning a statistical
model to be discussed now.

\subsection*{The Gaussian two-factor model}
The {\em Gaussian $k$-factor model with $n$ observed variables} consists
of all covariance matrices of $n$ jointly Gaussian random variables
$X_1,\ldots,X_n$, the {\em observed variables}, consistent with the
hypothesis that there exist $k$ further variables $Z_1,\ldots,Z_k$, the
{\em hidden variables}, such that the joint distribution of the $X_i$ and
the $Z_j$ is Gaussian and such that the $X_i$ are pairwise independent
given all $Z_j$. This set of covariance matrices turns out to be
\[ F_{k,n}:=\{D+SS^T \mid D \in M_{n}(\RR) \text{ diagonal and
positive definite, and } S \in M_{n,k}(\RR)\}, \]
where $M_{n,k}(\RR)$ is the space of real $n \times
k$-matrices, and $M_n(\RR)$ is the space of real $n \times
n$-matrices.
In~\cite{Drton07} this model is studied from an algebraic point of
view. In particular, the ideal of polynomials vanishing on $F_{k,n}$
is determined for $k=2,3$ and $n \leq 9$. The case where $k=1$
had already been done in~\cite{deLoera95}. The authors of~\cite{Drton07}
pose some very intriguing finiteness questions. In particular, one
might hope that for fixed $k$ the ideal of $F_{k,n}$ stabilises, as
$n$ grows, modulo its natural symmetries coming from
simultaneously permuting rows and columns.
For $k=1$ this is indeed the
case, and for arbitrary $k$ it is true in a weaker, set-theoretic sense~\cite{Draisma08b}. In this paper we prove that the ideals of $F_{2,n}$ stabilise
at $n=6$. To state our theorem we denote by $y_{ij}$ the coordinates on
the space of symmetric $n \times n$-matrices; we will identify $y_{ji}$
with $y_{ij}$. Recall from~\cite{Drton07} that the ideal of $F_{2,5}$
is generated by a single polynomial
\[ P:=\frac{1}{10} \sum_{\pi \in \Sym(5)} \sgn(\pi)
y_{\pi(1),\pi(2)}y_{\pi(2),\pi(3)}y_{\pi(3),\pi(4)}y_{\pi(4),\pi(5)}y_{\pi(5),\pi(1)}, \]
called the {\em pentad}. The normalisation factor is
important only because it ensures that all coefficients are $\pm 1$---indeed,
the stabiliser in $\Sym(5)$ of each monomial in the pentad is
the dihedral group of order $10$. We consider $P$ an element of
$\ZZ[y_{ij} \mid i \geq j]$. The ideal of $F_{2,6}$
contains another type of equation: the {\em off-diagonal minor}
\[ M:=\det(y[\{4,5,6\},\{1,2,3\}]) \in \ZZ[y_{ij} \mid i \geq j] \]
the determinant of the square submatrix of $y$ sitting in the
lower left corner of $y$. If $f$ is any polynomial
in $\RR[y_{ij} \mid i \geq j]$ that vanishes on $F_{2,n}$ and if
we regard $f$ as an element of $\RR[y_{ij} \mid i>j][y_{11},\ldots,y_{nn}]$, then each of the coefficients of the monomials
in the diagonal variables $y_{ii}$ is a polynomial in the off-diagonal
variables that vanishes on $F_{2,n}$, as well. Therefore the following
theorem settles the conjecture of Drton, Sturmfels, and
Sullivant, that pentads and off-diagonal minors generate the ideal of
$F_{2,n}$ for all $n$; see~\cite[Conjecture 26]{Drton07}.

\begin{thm} \label{thm:Main}
For any field $K$ and any natural number $n \geq 6$ the kernel
$I_n(K)$ of the homomorphism $K[y_{ij}\mid 1 \leq j < i \leq n] \to
K[s_1,\ldots,s_n,t_1,\ldots,t_n]$ determined by $y_{ij} \mapsto s_is_j +
t_it_j$ is generated, as an ideal, by the orbits of $P$ and $M$ under
the symmetric group $\Sym(n)$.
\end{thm}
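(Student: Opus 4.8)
The plan is to reduce Theorem~\ref{thm:Main} to a finite equivariant Gr\"obner basis computation over $\QQ$ and then to bootstrap from $\QQ$ to an arbitrary field $K$. We work in the polynomial ring $\RR[y_{ij} \mid i > j \in \NN]$ (or $\QQ[\ldots]$) in infinitely many variables, which carries the action of $G = \Inc$ by $\pi y_{ij} = y_{\pi(i),\pi(j)}$ — note that since $\pi$ is strictly increasing, $\pi(i) > \pi(j)$ whenever $i > j$, so this is well defined on the off-diagonal variables, and $G$ stabilises the variable set $X$, so condition EGB3 holds automatically. Equip $X$ with a suitable lexicographic-type order on the pairs $(i,j)$ and extend to a term order on $\Mon$ as in Example~\ref{ex:Rankk}; this realises EGB1 and EGB2. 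Condition EGB4 holds because $\Inc$-orbits of pairs of polynomials are governed by finitely many "overlap patterns" of the finitely many indices appearing, exactly as in the Aschenbrenner–Hillar setting of Example~\ref{ex:AH}. Let $I_\infty$ be the kernel of $y_{ij} \mapsto s_is_j + t_it_j$ in the infinite-variable ring; this is $G$-stable because the target ring also carries a compatible $\Inc$-action (pulling $s_i, t_i$ along $\pi$) intertwining the map. The first main step is to run the algorithm of Theorem~\ref{thm:GGB} on the input set $B_0 = \{P, M\}$ (the pentad and the off-diagonal $3\times 3$ minor, viewed in the infinite ring) and verify that it \emph{terminates}, outputting a finite $G$-Gr\"obner basis $B$ of the ideal $J_\infty := \la GB_0 \ra$.

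The second step is to prove that this computed ideal $J_\infty$ equals $I_\infty$. One inclusion is immediate: $P$ and $M$ lie in $I_\infty$ (the pentad vanishes on $F_{2,5}$ and the $3\times 3$ minor vanishes on rank-$2$ matrices, hence on the two-factor locus), and $I_\infty$ is $G$-stable, so $J_\infty \subseteq I_\infty$. For the reverse inclusion I would use a dimension/degree argument at each finite truncation: restricting to the variables $y_{ij}$ with $i,j \le n$ recovers the finite ideals, and by the results of~\cite{Drton07} for small $n$ (in particular $n = 6$, which is where stabilisation is claimed) one checks that the truncation of $J_\infty$ already cuts out the correct variety with the correct multiplicity — equivalently, that the Hilbert function of $\QQ[y_{ij} \mid i > j, i,j \le n]/J_n$ agrees with that of the coordinate ring of the two-factor model, which can be read off from the parametrisation. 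The $G$-Gr\"obner basis property is what makes this propagate to all $n$: its leading monomials, under the $\Inc$-action, generate $\lmon(J_\infty)$, and a primality check on the parametrised model (the target ring is a domain, so $I_\infty$ is prime) together with equality of leading-term ideals at one threshold forces $J_n = I_n$ for all $n \ge 6$.

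The third step is the descent to an arbitrary field $K$. Here I would show that the computed Gr\"obner basis $B$ can be taken with coefficients in $\ZZ$ (indeed $P, M \in \ZZ[y_{ij}]$ and the S-polynomial reductions involve only the integer leading coefficients $\pm 1$ of the relevant minors and pentad-translates, so no denominators are introduced) and that the leading coefficients occurring are units. Then reduction modulo any prime, and extension of scalars to any field $K$, preserves the Gr\"obner basis property; combined with the fact that the two-factor parametrisation is defined over the prime field and remains a dominant map with the same image dimension in every characteristic, one concludes $J_n(K) = I_n(K)$ for every field $K$ and every $n \ge 6$.

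The main obstacle I anticipate is the termination-and-verification of the equivariant Buchberger run: Theorem~\ref{thm:GGB} only guarantees correctness \emph{if} the algorithm halts, and there is no a priori reason it must here (the ambient monoid $(\Mon, \preceq)$ does have infinite antichains, as the excerpt notes). So one must actually carry out the computation, certify that only finitely many new orbit generators arise, and exhibit the resulting finite basis explicitly — this is the computational heart of the paper, and the place where the cleverness of the term order and of the reduction of S-pairs to finitely many $\Inc$-overlap types has to pay off. A secondary subtlety is ensuring the $\ZZ$-flatness/unit-leading-coefficient conditions really do hold throughout the reduction, so that the characteristic-free statement is not merely a statement over $\QQ$ in disguise.
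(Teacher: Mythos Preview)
Your proposal differs from the paper's proof in one essential respect, and that difference conceals a genuine gap. You work directly in the off-diagonal ring $K[y_{ij}\mid i>j]$ and run the equivariant Buchberger algorithm on $\{P,M\}$; the paper instead passes to the \emph{larger} ring $K[y_{ij}\mid i\ge j]$ including the diagonal variables, equips it with an elimination order in which every $y_{ii}$ dominates every off-diagonal $y_{ij}$, and computes an $\Inc$-Gr\"obner basis of the \emph{determinantal} ideal $J_\NN$ generated by all $3\times3$-minors of the symmetric matrix $y$. The point is that it is already known (Second Fundamental Theorem for the orthogonal group, or~\cite[Example~4.12]{Sturmfels05}) that $I_n(K)=J_n\cap K[y_{ij}\mid i>j]$; hence once one has an equivariant Gr\"obner basis of $J_\NN$ for an elimination order, its off-diagonal members automatically form an equivariant Gr\"obner basis of $\bigcup_n I_n(K)$, and one only has to observe (Remark~\ref{re:PentadMinor}, together with the computations in~\cite{Drton07} up to $n=9$) that those $20$ off-diagonal elements lie in the ideal generated by the $\Sym(n)$-orbits of $P$ and $M$.

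The gap in your plan is Step~2, verifying $J_\infty=I_\infty$. You propose comparing Hilbert functions, asserting that the Hilbert function of $K[y_{ij}]/I_n$ ``can be read off from the parametrisation'' and that equality at one threshold $n$ then propagates to all $n$. Neither claim is justified. Reading the Hilbert function of the image of a polynomial map is precisely the elimination problem you are trying to solve; and equality $J_{n_0}=I_{n_0}$ for a single $n_0$ does not force $J_n=I_n$ for larger $n$ without some finiteness of $I_\infty$ that is not available a priori---the ambient ring is not $\Inc$-noetherian, as the paper itself points out. The paper sidesteps all of this: on the determinantal side the Hilbert series of $J_n$ is known independently of the field by Conca~\cite{Conca94}, and this is exactly what certifies both correctness of the computed leading-term ideal and the passage to arbitrary $K$ (the verification is in fact carried out over $\FF_2$). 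Your $\ZZ$-coefficient, unit-leading-coefficient argument for characteristic independence is reasonable in spirit, but the paper's Hilbert-series route is shorter and does not require tracking an entire reduction trace.
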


\begin{re}
In~\cite{Drton08} it is proved that $F_{2,n}$ equals the set of all
positive definite matrices with the property that every principal $6
\times 6$-minor lies in $F_{2,6}$. Theorem~\ref{thm:Main} implies an
analogous statement for the Zariski closures of $F_{2,n}$ and $F_{2,6}$.
\end{re}

We sketch the proof of Theorem~\ref{thm:Main}, which appears in 
Section~\ref{sec:2f}. We put a suitable elimination order on the monomials
in $y_{ij},\ i,j \in \NN,\ i \geq j$, and report on a computation that
yields a finite $\Inc$-Gr\"obner basis for the determinantal ideal generated
by all $3 \times 3$-minors of $y$. Intersecting this $\Inc$-Gr\"obner basis
with the ring in the off-diagonal matrix entries gives
Theorem~\ref{thm:Main}.

\section*{Acknowledgments}
We thank Jan Willem Knopper and Rudi Pendavingh for motivating discussions
on alternative computations that would prove Theorem~\ref{thm:2factorGB}.
We also thank the referees for suggestions on improving the exposition.

\section{An algorithm for equivariant Gr\"obner bases} \label{sec:GGB}

We retain the setting of the introduction: $X$ is a potentially infinite
set and $\Mon$ is the free commutative monoid generated by $X$, equipped with
a term order (EGB1) preserved by the action of a monoid $G$ (EGB2)
which also preserves least common multiples (EGB3). Condition EGB4
will be needed only later. 

\begin{re} \label{re:WhyMonoid}
Note that $G$ acts by injective maps on $\Mon$ by EGB2.  It is essential
that we allow $G$ to be a monoid rather than a group. Indeed, the
image of $G$ in the monoid of injective maps $\Mon \to \Mon$ contains
no other invertible elements than the identity: If $\pi \in G$ then
$\pi m \geq m$ since otherwise $m>\pi m>\pi^2 m>\ldots$ would be an
infinite strictly decreasing chain. But then if (the image of) $\pi$ is
invertible, we have $\pi m>m>\pi^{-1}m>\pi^{-2}m>\ldots$, another
infinite decreasing chain.
\end{re}

We set out to translate familiar notions from the setting of ordinary
Gr\"obner bases to our equivariant setting. In what follows the coeffient
in $f$ of $\lmon(f)$, the {\em leading coefficient}, is denoted $\lc(f)$,
and $\lt(f)=\lc(f) \lmon(f)$ is the {\em leading term} of $f$.

\begin{lm} \label{lm:GBgenerates}
If $I$ is $G$-stable and $B$ is a $G$-Gr\"obner basis of $I$, then
$GB=\{\pi b \mid \pi \in G,\ b \in B\}$ generates the ideal $I$.
\end{lm}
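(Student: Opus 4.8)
The plan is to mimic the classical proof that a Gröbner basis generates the ideal, adapting it to the equivariant setting. We want to show that $GB$ generates $I$; since $B \subseteq I$ and $I$ is $G$-stable, clearly $GB \subseteq I$, so it suffices to prove the reverse inclusion $I \subseteq (GB)$.

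First I would take an arbitrary nonzero $f \in I$ and argue by Noetherian induction on $\lmon(f)$ with respect to the well-order $\leq$ (this is where EGB1 is essential). Assume every element of $I$ with strictly smaller leading monomial already lies in $(GB)$. Since $\lmon(f) \in \lmon(I)$ and $\lmon(GB) = \{\lmon(\pi b) \mid \pi \in G,\ b \in B\}$ generates $\lmon(I)$ as an ideal in $\Mon$, there exist $\pi \in G$, $b \in B$, and $m \in \Mon$ with $\lmon(f) = m \cdot \lmon(\pi b)$. Recall from the discussion preceding the definition that $\lmon(\pi b) = \pi\,\lmon(b)$, so this step uses that $G$ preserves the order (EGB2).

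Next, form the reduction $g := f - \dfrac{\lc(f)}{\lc(\pi b)}\, m\, (\pi b)$. Here I should note that $\lc(\pi b) = \lc(b) \neq 0$ since the action on monomials does not touch coefficients, so the division makes sense over the field $K$. The term $m(\pi b)$ has leading monomial $m\,\lmon(\pi b) = \lmon(f)$ and leading coefficient $\lc(\pi b)$, so by construction the leading terms cancel and $\lmon(g) < \lmon(f)$ (or $g = 0$). Moreover $g \in I$ because $f \in I$ and $\pi b \in GB \subseteq I$ with $I$ an ideal. By the induction hypothesis $g \in (GB)$, and since $m(\pi b) \in (GB)$ as well, we conclude $f = g + \dfrac{\lc(f)}{\lc(\pi b)} m (\pi b) \in (GB)$, completing the induction.

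There is no serious obstacle here — the argument is a routine transcription of the standard proof — but the one point deserving care is the well-foundedness of the induction: because $\leq$ is a well-order on $\Mon$ (not merely a partial order), there is no infinite strictly decreasing sequence of leading monomials, so Noetherian induction on $\lmon(f)$ is legitimate. I would phrase this explicitly rather than appealing to a division algorithm, since in this infinite-variable setting it is cleanest to reason directly with the well-order.
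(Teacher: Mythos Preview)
Your proof is correct and essentially identical to the paper's: both use the well-order on $\lmon(f)$ to reduce $f$ by subtracting $(\lt(f)/\lt(\pi b))\,\pi b$ for suitable $\pi\in G$, $b\in B$, with the paper phrasing it as a minimal-counterexample argument and you as Noetherian induction. Your version is slightly more explicit about why $GB\subseteq I$ and why the leading coefficient is nonzero, but the substance is the same.
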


\begin{proof}
If not, then take an $f \in I \setminus \la GB \ra$ with $\lmon(f)$
minimal. Take $b \in B$ and $\pi \in G$ with $\lmon(\pi b)|\lmon(f)$.
Subtracting $(\lt(f)/\lt(\pi b)) \pi b$ from $f$ yields an element in
$I\setminus \la GB \ra$ with leading term strictly smaller than that of
$f$, a contradiction.
\end{proof}

\begin{alg}[Equivariant remainder]
Given $f \in K[X]$ and $B \subseteq K[X]$, proceed as follows: if $\pi
\lmon(b) | \lmon(f)$ for some $\pi \in G$ and $b \in B$, then subtract
the multiple $(\lt(f)/\lt(\pi b))\pi b$ of $\pi b$ from $f$, so as to lower the
latter's leading monomial. Do this until no such pair $(\pi,b)$ exists
anymore. The resulting polynomial is called a {\em $G$-remainder}
(or an {\em equivariant remainder}, if $G$ is fixed) of $f$ modulo $B$.
\end{alg}

This procedure is non-deterministic, but necessarily finishes after
a finite number of steps, since $\leq$ is a well-order. Any potential
outcome is called an equivariant remainder of $f$ modulo $B$.

\begin{de}[Equivariant S-polynomials] \label{de:EqS}
Consider two polynomials $b_0,b_1$ with leading monomials $m_0,m_1$,
respectively. Let $H$ be a set of pairs $(\sigma_0,\sigma_1) \in G \times G$ for
which $Gb_0 \times Gb_1=\bigcup_{(\sigma_0,\sigma_1) \in H} \{(\pi
\sigma_0b_0,\pi \sigma_1b_1)
\mid \pi \in G\}$. For every element $(\sigma_0,\sigma_1) \in H$ we consider the
ordinary S-polynomial
\[ S(\sigma_0b_0,\sigma_1b_1):=
\lc(b_1)\frac{\lcm(\sigma_0m_0,\sigma_1m_1)}{\sigma_0m_0}\sigma_0b_0 -
\lc(b_0)\frac{\lcm(\sigma_0m_0,\sigma_1m_1)}{\sigma_1m_1}\sigma_1b_1. \]
The set $\{S(\sigma_0b_0,\sigma_1b_1) \mid (\sigma_0,\sigma_1) \in H\}$ is
called a {\em complete set of equivariant S-polynomials} for $b_0,b_1$.
It depends on the choice of $H$. Under condition EGB4, $H$ can be
chosen finite.
\end{de}

\begin{thm}[Equivariant Buchberger criterion] \label{thm:Buchberger}
Under the assumptions EGB1, EGB2, and EGB3, let $B$ be a subset of
$K[X]$ such that for all $b_0,b_1 \in B$ there exists a complete
set of S-polynomials each of which has $0$ as a $G$-remainder modulo
$B$. Then $B$ is a $G$-Gr\"obner basis of the ideal generated by $GB$.
\end{thm}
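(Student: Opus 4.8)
The plan is to mimic the classical Buchberger argument, but carried out at the level of $G$-orbits of leading monomials, using the equivariant $S$-polynomials of Definition~\ref{de:EqS} in place of ordinary ones. Let $I = \langle GB \rangle$. Since every $b \in B$ lies in $I$ and $G$ preserves the order, we have $\lmon(\pi b) = \pi\,\lmon(b)$, so the monomial ideal $J := \langle \lmon(GB) \rangle_{\Mon}$ is contained in $\lmon(I)$. The content of the theorem is the reverse inclusion: every leading monomial of an element of $I$ is divisible by some $\pi\,\lmon(b)$.

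First I would fix $f \in I \setminus \{0\}$ and write $f = \sum_{k=1}^{N} c_k\, \pi_k b_{j_k}$ with $c_k \in K^\times$, $\pi_k \in G$, $b_{j_k} \in B$. Among all such representations of $f$ (over all choices of $N$, $c_k$, $\pi_k$, $b_{j_k}$), choose one for which $\delta := \max_k \lmon(\pi_k b_{j_k})$ is $\leq$-minimal; this maximum exists because $\leq$ is a well-order and each summand is nonzero. If no cancellation occurred among the top-degree terms, i.e.\ if $\lmon(f) = \delta$, then $\lmon(f) = \lmon(\pi_k b_{j_k}) = \pi_k\,\lmon(b_{j_k})$ for some $k$, so $\lmon(f) \in J$ and we are done. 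The heart of the proof is the case where cancellation does occur, i.e.\ $\lmon(f) < \delta$; here I will show the representation was not minimal, a contradiction.

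In that case, let $S$ be the set of indices $k$ with $\lmon(\pi_k b_{j_k}) = \delta$; then $\sum_{k \in S} c_k\,\lc(\pi_k b_{j_k})\,\delta = 0$, so in particular $|S| \geq 2$. Pick two indices $k, \ell \in S$. Writing $b_0 = b_{j_k}$, $b_1 = b_{j_\ell}$, $m_0 = \lmon(b_0)$, $m_1 = \lmon(b_1)$, we have $\pi_k m_0 = \pi_\ell m_1 = \delta$. By hypothesis there is a complete set $H$ of equivariant $S$-polynomials for $b_0, b_1$; since $Gb_0 \times Gb_1 = \bigcup_{(\sigma_0,\sigma_1)\in H}\{(\rho\sigma_0 b_0, \rho\sigma_1 b_1)\mid \rho\in G\}$, there is $(\sigma_0,\sigma_1)\in H$ and $\rho \in G$ with $\pi_k = \rho\sigma_0$ and $\pi_\ell = \rho\sigma_1$. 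Now I invoke the classical ``telescoping'' identity: the difference $c_k\,\pi_k b_0 + c_\ell\,\pi_\ell b_1$ (with the coefficients adjusted by the top-term cancellation relation) is, up to a $K^\times$-scalar and a monomial multiple, equal to $\rho\bigl(\mu \cdot S(\sigma_0 b_0, \sigma_1 b_1)\bigr)$, where $\mu = \lcm(\pi_k m_0, \pi_\ell m_1)/\lcm(\sigma_0 m_0, \sigma_1 m_1) \cdot (\cdots)$; here EGB2 and EGB3 are exactly what is needed, since $\rho\,\lcm(\sigma_0 m_0, \sigma_1 m_1) = \lcm(\rho\sigma_0 m_0, \rho\sigma_1 m_1) = \lcm(\pi_k m_0, \pi_\ell m_1)$, so the monomial bookkeeping matches that of the ordinary Buchberger argument after applying $\rho$. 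Since $S(\sigma_0 b_0, \sigma_1 b_1)$ has $0$ as a $G$-remainder modulo $B$, it equals a $G$-linear combination $\sum_i d_i\,\tau_i b_{r_i}$ with $\lmon(\tau_i b_{r_i}) \leq \lmon(S(\sigma_0 b_0, \sigma_1 b_1)) < \lcm(\sigma_0 m_0, \sigma_1 m_1)$; applying $\rho$ and multiplying by the monomial $\mu$ (and using that $G$ preserves strict order and products) expresses $c_k\,\pi_k b_0 + c_\ell\,\pi_\ell b_1$ as a $G$-linear combination of elements of $GB$ all of whose leading monomials are strictly below $\delta$. Substituting this back into the representation of $f$ and repeating reduces the number of indices attaining $\delta$; after finitely many steps either $\delta$ strictly decreases or the top terms no longer cancel, in both cases contradicting minimality of $\delta$. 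This is the standard Buchberger induction, and the one place to be careful is the bookkeeping in the previous sentence — that lowering the top monomials of a finite sub-sum does not create new summands at level $\delta$ — which is handled exactly as in the classical proof since all new summands have leading monomial $< \delta$.

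The main obstacle, and the only genuinely new ingredient beyond the classical argument, is the step of locating the pair $(\sigma_0,\sigma_1) \in H$ and the common element $\rho \in G$ with $\pi_k = \rho\sigma_0$, $\pi_\ell = \rho\sigma_1$, and then checking that $\rho$ commutes appropriately with $\lcm$ and with the order so that applying $\rho$ to the ordinary $S$-polynomial identity produces precisely the identity relating $\pi_k b_0$ and $\pi_\ell b_1$. This is where EGB2 (monoid homomorphism, order-preservation) and EGB3 ($\pi\,\lcm = \lcm\,\pi$) are used essentially; once that compatibility is in hand, the rest is a routine transcription of Buchberger's criterion. I would also remark that Lemma~\ref{lm:GBgenerates} is not needed for this direction, though it guarantees that the $G$-Gr\"obner basis $B$ we have just certified does in fact generate $I = \langle GB\rangle$.
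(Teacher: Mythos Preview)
Your approach is essentially the same as the paper's: the key step in both is to take two elements $\pi_0 b_0,\pi_1 b_1$ of $GB$ whose leading terms collide, invoke the complete set $H$ to find $(\sigma_0,\sigma_1)\in H$ and $\rho\in G$ with $(\pi_0 b_0,\pi_1 b_1)=(\rho\sigma_0 b_0,\rho\sigma_1 b_1)$, and then use EGB3 to get $S(\pi_0 b_0,\pi_1 b_1)=\rho\,S(\sigma_0 b_0,\sigma_1 b_1)$, after which applying $\rho$ to the known reduction to $0$ finishes the job. The paper packages this more cleanly by first proving the ordinary Buchberger criterion (for $G=\{1\}$, possibly infinitely many variables) and then observing that $GB$ satisfies that ordinary criterion; you inline the two steps into a single argument. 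Both routes are equally short, and the paper's factorisation has the mild advantage that the classical rewriting step is isolated and can be quoted verbatim.

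A few slips in your write-up should be fixed. First, the representation $f=\sum_k c_k\,\pi_k b_{j_k}$ with $c_k\in K^\times$ cannot be right: $I=\langle GB\rangle$ is an ideal, so you need monomial (or polynomial) coefficients, say $f=\sum_k c_k m_k\,\pi_k b_{j_k}$ with $m_k\in\Mon$; without these the telescoping step has no monomial factor to work with. Second, from the definition of $H$ you only obtain $\pi_k b_0=\rho\sigma_0 b_0$ and $\pi_\ell b_1=\rho\sigma_1 b_1$ as \emph{polynomials}, not $\pi_k=\rho\sigma_0$ in $G$; this weaker statement is all that is needed, but you should state it correctly. Third, your description of the monomial $\mu$ as ``$\lcm(\pi_k m_0,\pi_\ell m_1)/\lcm(\sigma_0 m_0,\sigma_1 m_1)\cdot(\cdots)$'' is garbled: the relevant monomial is simply $t$ with $\delta=t\cdot\lcm(\pi_k m_0,\pi_\ell m_1)$, and one multiplies $\rho S(\sigma_0 b_0,\sigma_1 b_1)=S(\pi_k b_0,\pi_\ell b_1)$ by $t$ to match the level $\delta$. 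With these corrections your argument goes through.
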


We will first prove this for {\em ordinary} Gr\"obner bases, and from
that deduce the theorem for equivariant Gr\"obner bases. The proof for
the ordinary case is identical to the proof in the case of finitely many
variables. We include it for completeness, and also because we have no
reference where the result is stated for infinitely many variables.

\begin{proof}[Proof of Theorem~\ref{thm:Buchberger} in case
$G=\{1\}$.]
We may and will assume that all elements of $B$ are monic. Let $I$
denote the ideal generated by $B$. If $\lmon(B)$ does not generate
the ideal $\lmon(I)$ in $\Mon$ then there exists a polynomial of the form
\[ f=\sum_{b \in B} f_{b} b \]
with only finitely many of the $f_{b}$ non-zero, for which $\lmon(f)$ 
is not in the ideal generated by $\lmon(B)$. We may choose the
expression above such that first, the {\em maximum} $m$ of $\lmon(f_{b}
b)$ over all $b$ for which $f_{b}$ is
non-zero is {\em minimal} and second, the number of $b$ with
$\lmon(f_{b} b)=m$ is also minimal. The maximum is then attained for
at least two values $b_0,b_1$ of $b$, because otherwise $m$ would be
the leading monomial of $f$. Write $m_i:=\lmon(b_i)$ for $i=0,1$, and
let $t_0,t_1$ be such that $\lcm(m_0,m_1)=t_0m_0=t_1m_1$. Now
$m=\lmon(f_{b_0})m_0=\lmon(f_{b_1})m_1$ is a multiple of both $m_0$
and $m_1$, and therefore $\lmon(f_{b_0})$ is divisible by $t_0$; set
\[ A:=\frac{\lt(f_{b_0})}{t_0}. \]
Next consider 
\[ S :=S(b_0,b_1) = t_0b_0-t_1b_1, \]
where we have used that $b_0$ and $b_1$ are monic. As $0$ is a remainder
of $S$ modulo $B$ by assumption, we can write $S$ as a sum
$\sum_{b \in B} s_{b}b$
with only finitely many non-zero terms that moreover satisfy
$\lmon(s_{b} b) \leq \lmon(S) <
\lcm(m_0,m_1)$ for all $b$. Then we may rewrite $f$ as 
\[
f=f-A(S - \sum_{b} s_{b} b) =
\sum_{b} (f_{b}+f'_{b}+f''_{b}) b
\]
where $f'_{b} = A s_{b}$ and 
\[
f''_{b} = \left\{
\begin{array}{ll}
-\lt(f_{b_0}) & \mbox{if $b= b_0$,} \\[2pt]
\lc(f_{b_0})\lmon(f_{b_1}) &
\mbox{if $b = b_1$,} \\[2pt]
0 & \mbox{otherwise.}\\
\end{array}\right.
\]
For all $b \in B$ we have 
\begin{align*} 
\lmon((A s_{b})b)&=\lmon(A s_{b} b)
<\frac{\lmon(f_{b_0})}{t_0} \lcm(m_0,m_1)\\
&= \lmon(f_{b_0}) m_0=m,
\end{align*}
so for all $b$ we have $\lmon(f'_{b} b) < m$.
Moreover, $\lmon((f_{b_0}+f''_{b_0})b_0)$
is strictly smaller than $m$.
Finally, $\lmon(f''_{b_1}b_1) = m$.
We conclude that either $\max_{b}\lmon((f_{b}+f'_{b}+f''_{b}) b)$ is
strictly smaller than $m$, or else the number of $b$ for which
it equals $m$ is smaller than the number of $b$ for which
$\lmon(f_{b}b)$ equals $m$. This contradicts the
minimality of the expression chosen above.
\end{proof}

\begin{proof}[Proof of Theorem~\ref{thm:Buchberger} using the ordinary
Buchberger criterion.]
We prove that $GB$ is an ordinary Gr\"obner basis of the ideal that it
generates. By the ordinary Buchberger criterion it suffices to verify
that for all $b_0,b_1 \in B$ and $\pi_0,\pi_1 \in G$ the S-polynomial
$S(\pi_0 b_0,\pi_1 b_1)$ has $0$ as a remainder modulo $GB$. By assumption
there exists a triple $(\sigma_0,\sigma_1,\pi_2)$ for which 
$(\pi_0 b_0,\pi_1 b_1)=(\pi_2 \sigma_0 b_0,\pi_2 \sigma_1
b_1)$ and for which $S(\sigma_0
b_0,\sigma_1 b_1)$ has $0$ as a $G$-remainder modulo $B$,
which
means that it has $0$ as an ordinary remainder modulo $GB$.
Since $G$ preserves least common multiples (EGB3), we have
\[ S(\pi_2 \sigma_0 b_0, \pi_2 \sigma_1 b_1)=
\pi_2 S(\sigma_0 b_0,\sigma_1 b_1), \]
and applying $\pi_2$ to the entire reduction of
$S(\sigma_0 b_0,\sigma_1 b_1)$ to $0$ modulo $GB$ yields a reduction
of $S(\pi_0 b_0,\pi_1 b_1)$ to $0$, as claimed.
\end{proof}

The following example shows that EGB3 is not a redundant assumption in
Theorem~\ref{thm:Buchberger}.

\begin{ex} \label{ex:EGB3}
Suppose that $X=\{x,y,z_1,z_2,\ldots\}$ and that the monoid $G$ is
generated by $\Inc$ acting by $\pi z_i=z_{\pi i}$ and trivially
on $x,y$, together with a single homomorphism $\sigma:\Mon \to \Mon$
determined by $\sigma x=x$, $\sigma y=x z_1$, and $\sigma z_i=z_{i+1}$ for all
$i$. Then $G$ preserves the lexicographic order on $\Mon$ for which
$z_{i+1}>z_i>y>x$ for all $i$. Now consider the set $B=\{y+1\}$. We have
\[ G(y+1) \times G(y+1)=G(y+1,y+1) \cup G(y+1,xz_1+1) \cup
G(xz_1+1,y+1), \]
so we may take $H$ from Definition~\ref{de:EqS} equal to 
$\{(1,1),(1,\sigma),(\sigma,1)\}.$
The S-poly\-no\-mial $S(y+1,y+1)$ is zero, and the S-polynomials
$S(y+1,xz_1+1)$ and $S(xz_1+1,y+1)$ reduce to zero modulo $y+1$ and
$\sigma(y+1)=xz_1+1$. Hence we have a complete set of S-polynomials of
$y+1$ with itself that all $G$-reduce to zero modulo $B$. Nevertheless,
$B$ is not a $G$-Gr\"obner basis of the $G$-stable ideal that it
generates, since that ideal also contains $S(xz_1+1,xz_2+1)=z_2-z_1$,
which does not $G$-reduce to zero modulo $B$.
\end{ex}

Here is an example where EGB4 is not fulfilled.

\begin{ex} \label{ex:EGB4}
Let $X=\{x,y\}$, let $G$ be the multiplicative monoid of the positive
integers, where $m$ acts by $x \mapsto x^m$ and $y \mapsto y^m$. Now
\[ Gx \times Gy = \{(x^i,y^j) \mid i,j \in \ZZ_{>0}\}, \]
while the diagonal $G$-orbit of the pair $(x^i,y^j)$ equals
$\{(x^{ai},y^{aj}) \mid a \in \ZZ_{>0}\}$. Hence $Gx \times Gy$ is not
the union of finitely many $G$-orbits.
\end{ex}

However, if we do assume EGB4, then every pair $(b_0,b_1)$ has a finite
and computable complete set of $S$-polynomials and have the following
theoretical algorithm, alluded to in Theorem~\ref{thm:GGB}. We do not
claim that it terminates, but if it does, then it returns a finite
equivariant Gr\"obner basis by Theorem~\ref{thm:Buchberger}.

\begin{alg}[Equivariant Buchberger algorithm]\ 
\begin{description}
\item[Input] a finite subset $B$ of $K[X]$.
\item[Output (assuming termination)] a finite equivariant Gr\"obner
basis of the ideal generated by $GB$.
\item[Procedure] 
\ 
\begin{enumerate} 
	\item $P:=B \times B$;
	\item while $P \neq \emptyset$ do 
		\begin{enumerate}
		\item choose $(b_0,b_1) \in P$ and set $P:=P \setminus
		\{(b_0,b_1)\}$;
		\item compute a finite complete set $\mS$ of equivariant
		$S$-polynomials for $(b_0,b_1)$;
		\item while $\mS \neq \emptyset$ do 
			\begin{enumerate}
			\item choose $f \in \mS$ and set $\mS:=\mS
			\setminus \{f\}$;
			\item compute a $G$-remainder $r$ of $f$ modulo $B$; 
			\item if $r \neq 0$ then set $B:=B \cup \{r\}$
			and $P:=P \cup (B \times r)$;
			\end{enumerate}
		\end{enumerate}
	\item return $B$.
\end{enumerate}
\end{description}
\end{alg}

Note the order in which $B$ and $P$ are updated: one needs to add
$(r,r)$ to $P$, as well. The  proof of correctness of this algorithm is
straightforward and omitted. 

\begin{re}
If the partial order $\preceq$ on $\Mon$ defined in the introduction
does not admit infinite antichains, then the equivariant Buchberger
algorithm always terminates. Indeed, suppose that the algorithm would
not terminate, and let $r_1,r_2,r_3,\ldots$ be the sequence of remainders
added consecutively to $B$. Then for all $i<j$ we have $\lmon(r_i) \not
\preceq \lmon(r_j)$ since $r_j$ is $G$-reduced modulo $r_i$. Using the
fact that decreasing $\preceq$-chains are finite, one finds an infinite
subsequence $r_{i_1}, r_{i_2}, \ldots $ with $i_1<i_2<\ldots$ such that
$\lmon(r_{i_a}) \not \preceq \lmon(r_{i_b})$ holds not only for $a<b$ but
also for $a>b$. This sequence contradicts the assumption that $\preceq$
does not have infinite antichains.
\end{re}

\section{An equivariant Gr\"obner basis for the two-factor model}
\label{sec:2f}

Theorem~\ref{thm:Main} will follow from the following result. Let $X=\{y_{ij}
\mid \ i,j \in \NN, i \geq j\}$ be a set of variables representing the
entries of a symmetric matrix. We consider the lexicographic monomial order
on $\Mon$ in which the diagonal variables $y_{ii}$ are larger than all
variables $y_{ij}$ with $i>j$, and apart from that $y_{ij} \geq y_{i'j'}$
if and only if $i>i'$ or $i=i'$ and $j \geq j'$. So for instance we have
\[ y_{2,2} > y_{1,1} > y_{5,2} > y_{4,3}. \]
Note that this monomial order is compatible with the action of the monoid
$\Inc$ of all increasing maps $\NN \to \NN$. For any polynomial $p \in
K[X]$ let $\li{p}$ denote the {\em largest index of $p$}, i.e., the largest
index appearing in any of the variables in any of the monomials of $p$.

\begin{thm} \label{thm:2factorGB}
For any field $K$, let $J_\NN(K)$ be the ideal in $K[X]$ generated
by all $3 \times 3$-minors of the matrix $y$ (recall that we identify
$y_{ji}$ for $j<i$ with $y_{ij}$). Relative to the monomial order $\leq$
the ideal $J_\NN(K)$ has an $\Inc$-Gr\"obner basis $B$ consisting of
$42$ polynomials. The intersection $B \cap K[y_{ij} \mid i >j]$ is an
$\Inc$-Gr\"obner basis of $J_\NN(K) \cap K[y_{ij} \mid i>j]$ consisting
of $20$ polynomials. The largest indices and the degrees of the elements
in these bases are summarised in Table~\ref{tab:Bases}.
\begin{table} \label{tab:Bases}
\begin{tabular} {|l|lllllll|}
\hline
$\li{p}$ & 3 & 4 & 5 & 6 & 7 & 8 & 9 \\
\hline
$\# p \in B$ & 1 & 6 & 11 & 10 & 8 & 5 & 1 \\
degrees & $3^1$ & $3^6$ & $3^{10} 5^1$ & $3^5 5^5$ & $5^8$ &
$5^5$ & $5^1$\\
$\# p \in B \cap K[y_{ij} \mid i>j]$ & & & 1 & 5 & 8 & 5 & 1\\
degrees & & & $5^1$ & $3^5 $ & $5^8$ & $5^5$ & $5^1$\\
\hline
\end{tabular}
\caption{Degrees of polynomials in the $\Inc$-Gr\"obner basis $B$ of
$J_\NN(K)$, grouped according to largest index. The first row records
the largest index, the second row the number of polynomials in $B$ with that
largest index, the third row their degrees with multiplicities written as
exponents, the fourth row counts the number of polynomials containing
only variables $y_{ij}$ with $i>j$, and the fifth row records their
degrees.}
\end{table}
\end{thm}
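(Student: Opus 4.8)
The plan is to obtain $B$ as the output of the equivariant Buchberger algorithm of Section~\ref{sec:GGB}, run on a finite $\Inc$-generating set of $J_\NN(K)$, and then to certify that output against the equivariant Buchberger criterion, Theorem~\ref{thm:Buchberger}; the numerical data in Table~\ref{tab:Bases} is to be read off from the computation. First I would confirm that the present set-up satisfies EGB1--EGB4 with $G=\Inc$ acting on $X$ by $\pi y_{ij}=y_{\pi(i),\pi(j)}$, so that Theorem~\ref{thm:GGB} and the algorithm apply. Conditions EGB1 and EGB2 are precisely the compatibility of the displayed order with $\Inc$ noted above (that this lexicographic order is a well-order is seen as in Example~\ref{ex:Rankk}), and EGB3 holds because $\Inc$ stabilises $X$. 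For EGB4, given $f,h\in K[X]$ with $\li{f}=p$ and $\li{h}=q$, I would write an arbitrary pair $(\pi f,\rho h)$ with $\pi,\rho\in\Inc$ as $\tau\cdot(\alpha f,\beta h)$, where $\alpha,\beta\in\Inc$ restrict to increasing injections of $\{1,\dots,p\}$ and $\{1,\dots,q\}$ into some $\{1,\dots,v\}$ with $v\le p+q$ and $\im\alpha\cup\im\beta=\{1,\dots,v\}$, and $\tau\in\Inc$ carries $\{1,\dots,v\}$ onto $\pi(\{1,\dots,p\})\cup\rho(\{1,\dots,q\})$. There are only finitely many such interleaving patterns $(\alpha,\beta)$, they are easily listed, and $\Inc f\times\Inc h=\bigcup_{(\alpha,\beta)}\Inc(\alpha f,\beta h)$; this is EGB4.

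Next I would exhibit a finite $\Inc$-generating set of $J_\NN(K)$. Any $3\times 3$-minor of $y$ has the form $\det(y[R,C])$ for $3$-subsets $R,C\subseteq\NN$, and its index set $R\cup C$ has at most six elements; composing with the increasing bijection $R\cup C\to\{1,\dots,|R\cup C|\}$, extended to a member of $\Inc$, exhibits the minor in the $\Inc$-orbit of a $3\times 3$-minor all of whose indices lie in $\{1,\dots,6\}$. Up to $\Inc$ there are only finitely many minors of this last kind --- one for each combinatorial type of a pair of $3$-subsets of a set of size at most six --- so their $\Inc$-orbits generate $J_\NN(K)$.

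I would then run the equivariant Buchberger algorithm on this finite input; since the minors and all leading terms that arise have coefficient $\pm1$, the computation can be carried out over $\ZZ$, which is what makes the conclusion hold over an arbitrary field $K$. By Theorem~\ref{thm:Buchberger} applied to the output $B$, together with the fact that the ideal generated by $\Inc B$ stays equal to $J_\NN(K)$ throughout (only $G$-remainders of $S$-polynomials are added), $B$ is an $\Inc$-Gr\"obner basis of $J_\NN(K)$ as soon as the algorithm terminates; the computation shows that it does, producing the $42$ polynomials with the largest indices and degrees recorded in Table~\ref{tab:Bases}. To make this independent of the search, I would re-verify directly that each of the $42$ listed polynomials lies in $J_\NN(K)$ and that, for every pair, some complete set of equivariant $S$-polynomials has $G$-remainder $0$ modulo $B$ --- a finite check. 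Finally, for the second assertion I would use that in the chosen order every diagonal variable $y_{ii}$ is larger than every off-diagonal variable, so any monomial involving a diagonal variable is larger than any monomial involving none; hence if $\lmon(b)$ lies in $K[y_{ij}\mid i>j]$ for some $b\in B$, then so does $b$. Since $\Inc$ preserves both the diagonal and the off-diagonal variables, the usual elimination argument carries over equivariantly: if $f\in J_\NN(K)\cap K[y_{ij}\mid i>j]$ is nonzero, some $\pi\lmon(b)=\lmon(\pi b)$ divides $\lmon(f)$, forcing $\pi\lmon(b)$, hence $\lmon(b)$, hence $b$ into $K[y_{ij}\mid i>j]$. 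Thus $B\cap K[y_{ij}\mid i>j]$ is an $\Inc$-Gr\"obner basis of $J_\NN(K)\cap K[y_{ij}\mid i>j]$, and the computation shows it consists of the $20$ polynomials recorded in Table~\ref{tab:Bases}.

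The step I expect to be the main obstacle is termination. Since $(\Mon,\preceq)$ has infinite antichains in this polynomial ring --- as exhibited in the introduction --- Theorem~\ref{thm:GGB} gives no a priori guarantee that the algorithm halts, so termination (with exactly these $42$ and $20$ polynomials) can only be established by actually carrying the computation through; its feasibility in turn rests on an efficient implementation of the orbit enumeration behind EGB4 and on controlling the growth of $B$. Once termination has been observed, the independent recheck of the equivariant Buchberger criterion on the output turns Theorem~\ref{thm:2factorGB} into a verifiable certificate.
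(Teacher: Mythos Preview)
Your plan is sound and would certify the theorem, but it departs from the paper's proof in two places. First, the paper does \emph{not} obtain the candidate $B$ by running the equivariant Buchberger algorithm; instead it computes an \emph{ordinary} Gr\"obner basis of $J_9(\QQ)$ in {\sc Singular} and keeps exactly those elements whose index set is an interval $\{1,\dots,k\}$. This produces the $42$ polynomials directly, sidestepping the termination question you (correctly) flag as the main obstacle; only the \emph{verification} of the equivariant Buchberger criterion on this fixed $B$ is then carried out (in {\tt C}), which is guaranteed finite by Lemma~\ref{lm:FiniteS}. Second, for the passage to arbitrary fields the paper does not argue via integrality and unit leading coefficients as you do; it uses a Hilbert-series comparison: $\Inc B$ restricted to $n$ indices lies in the minor ideal, so it is a Gr\"obner basis over $K$ iff $\langle\lmon(B)\rangle$ has the Hilbert series of the minor ideal, and by~\cite{Conca94} that series is field-independent. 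This lets the paper verify the criterion over $\FF_2$ alone. Your $\ZZ$-argument can be made rigorous, but note that one must check that all $42$ elements are monic over $\ZZ$ and are $\ZZ[y]$-combinations of minors, and the implication ``reduces to $0$ over $\ZZ$ $\Rightarrow$ criterion holds over $\FF_p$'' should be phrased via standard representations $S=\sum s_b b$ with $\lmon(s_b b)<\lcm(m_0,m_1)$ rather than via the remainder algorithm, since intermediate leading coefficients could vanish modulo $p$. Your elimination argument for the off-diagonal part is correct and matches what the paper uses implicitly.
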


\begin{re} \label{re:PentadMinor}
The polynomial with largest index $5$ in the $\Inc$-Gr\"obner basis
$B \cap K[y_{ij} \mid i >j]$ is the pentad $P$. The five degree-$3$
polynomials with largest index $6$ in that Gr\"obner basis form the
$\Sym(\NN)$-orbit of the off-diagonal minor $M$. All $19$ remaining
polynomials are already in the $\Inc$-stable ideal generated by
these polynomials; this latter statement also follows from the result
in~\cite{Drton07} that at least up to $n=9$ the ideal of the two-factor
model is generated by pentads and off-diagonal minors. The complete
$\Inc$-Gr\"obner bases of $J_\NN(K)$ can be downloaded from the second
author's website.
\end{re}

\begin{re}
A Gr\"obner basis of the ideal of the two-factor model $F_{2,n}$  relative
to {\em circular term orders} was already found in~\cite{Sullivant08}.
The proof involves general techniques for determining the ideal of secant
varieties, especially of toric varieties; see also~\cite{Sturmfels05}. The
Gr\"obner basis found there, however, does not stabilise as $n$
grows---and indeed, circular term orders are not compatible with the
action of $\Inc$. It would be interesting to find a direct translation
between Sullivant's Gr\"obner basis and ours.
\end{re}

Theorem~\ref{thm:2factorGB} implies Theorem~\ref{thm:Main}.

\begin{proof}[Proof of Theorem~\ref{thm:Main}.]
It is well known that the $(k+1) \times (k+1)$-minors of the symmetric
matrix $(y_{ij})_{i,j=1,\ldots,n}$ generate the ideal of all polynomials
vanishing on all rank-$k$ matrices (for a recent combinatorial proof of
this fact, see~\cite[Example 4.12]{Sturmfels05};
in characteristic $0$ this fact is known as the Second Fundamental
Theorem for the orthogonal group). Hence the ideal $I_n(K)$ of
Theorem~\ref{thm:Main} is
the intersection of the ideal $J_n$ generated by the $3 \times
3$-minors of $(y_{ij})_{i,j=1,\ldots,n}$ with the ring $K[y_{ij}
\mid i>j]$. Theorem~\ref{thm:2factorGB} implies that one obtains a
Gr\"obner basis of $J_n$, relative to the restriction of the monomial
order on $K[y_{ij} \mid i,j \in \NN, i\geq j]$ to $K[y_{ij} \mid 1 \leq
j < i \leq n]$ by applying all increasing maps $\{1,\ldots,\li{p}\}
\to \{1,\ldots,n\}$ to all $p \in B \cap K[y_{ij} \mid i >j] $ with
$\li{p} \leq n$. Such an increasing map can be extended to an element
of $\Sym(n)$, and Remark~\ref{re:PentadMinor} concludes the proof.
\end{proof}

We conclude with some remarks on the computation that proved
Theorem~\ref{thm:2factorGB}. First we need to verify EGB4.

\begin{lm} \label{lm:FiniteS}
For all $b_0,b_1 \in K[y_{ij} \mid i,j \in \NN,  i \geq j]$
the set $(\Inc b_0) \times (\Inc b_1)$ is the union of a
finite number of $\Inc$-orbits.
\end{lm}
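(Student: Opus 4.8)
The plan is to show that a complete set of equivariant $S$-pairs for $b_0,b_1$ is parametrized by the finitely many ways of ``overlapping'' the index sets of $\sigma_0 b_0$ and $\sigma_1 b_1$, and that these overlaps are controlled by a bounded amount of combinatorial data. Let $d_k := \li{b_k}$ for $k=0,1$, so that $b_k$ uses only variables $y_{ij}$ with $i \le d_k$. An arbitrary element of $(\Inc b_0) \times (\Inc b_1)$ has the form $(\sigma_0 b_0, \sigma_1 b_1)$ with $\sigma_0,\sigma_1 \in \Inc$; let $T_k := \sigma_k(\{1,\dots,d_k\}) \subseteq \NN$ be the (finite) set of indices that actually occur. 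First I would observe that applying a further element $\pi \in \Inc$ to both components simultaneously replaces $(T_0,T_1)$ by $(\pi(T_0),\pi(T_1))$, i.e.\ by an order-preserving relabelling of the union $T_0 \cup T_1$ that respects the partition of $T_0 \cup T_1$ into the four blocks $T_0 \setminus T_1$, $T_0 \cap T_1$, $T_1 \setminus T_0$ together with the interleaving pattern of these blocks inside $T_0 \cup T_1$.

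The key step is then: the diagonal $\Inc$-orbit of $(\sigma_0 b_0, \sigma_1 b_1)$ is determined by the ``pattern'' of the pair $(T_0,T_1)$, meaning the isomorphism type of the totally ordered set $T_0 \cup T_1$ together with the two marked subsets $T_0$ and $T_1$ (equivalently, the word in the alphabet $\{0\text{-only}, \text{both}, 1\text{-only}\}$ obtained by listing, in increasing order, which of $T_0, T_1$ each element of $T_0 \cup T_1$ belongs to). Indeed, given two pairs $(T_0,T_1)$ and $(T_0',T_1')$ with the same pattern, the unique order isomorphism $T_0 \cup T_1 \to T_0' \cup T_1'$ extends (by interpolating and extending beyond the maximum, using that $\NN$ is unbounded) to an element $\pi \in \Inc$ with $\pi(T_k) = T_k'$; and since $b_k$ is expressed purely in terms of the $y_{ij}$ indexed by the ordered set $\{1,\dots,d_k\}$, which $\sigma_k$ maps order-isomorphically onto $T_k$, one checks directly that $\pi \sigma_k b_k = \sigma_k' b_k$, where $\sigma_k'$ realizes $\{1,\dots,d_k\} \xrightarrow{\sim} T_k'$. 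Hence the orbit depends only on the pattern.

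Finally I would bound the number of patterns: since $|T_k| = d_k$ is fixed and $|T_0 \cup T_1| \le d_0 + d_1$, there are only finitely many isomorphism types of $(T_0 \cup T_1, T_0, T_1)$ --- at most the number of words of length $\le d_0+d_1$ in a three-letter alphabet with prescribed letter multiplicities --- so $(\Inc b_0) \times (\Inc b_1)$ is a finite union of diagonal $\Inc$-orbits, and a generator for each can be written down explicitly by choosing, for each pattern, any concrete realization $(T_0,T_1)$ inside $\{1,\dots,d_0+d_1\}$ (e.g.\ the lexicographically least one) and the associated pair $(\sigma_0 b_0, \sigma_1 b_1)$. I expect the main obstacle to be the bookkeeping in the ``same pattern implies same orbit'' step: one must be careful that $\Inc$ consists of \emph{strictly} increasing maps (so the extension $\pi$ must be chosen strictly increasing, which is possible precisely because between any two naturals, and above any natural, there is room --- this is where the extra variables beyond index $d_0+d_1$ are used), and that the identification $\pi \sigma_k b_k = \sigma_k' b_k$ genuinely holds coefficient-by-coefficient because $\sigma_k$ acts on $K[X]$ by a $K$-algebra homomorphism determined by its effect on indices. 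The rest is routine counting, and in fact this argument shows EGB4 holds whenever $\Inc$ is replaced by any monoid of order-embeddings of a fixed ordered index set that acts by relabelling variables, which is the only generality we need.
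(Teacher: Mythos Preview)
Your approach is essentially the same as the paper's: the paper also parametrizes the diagonal orbits by pairs $(S_0,S_1)$ with $|S_i|=\li{b_i}$ and $S_0\cup S_1=\{1,\ldots,k\}$ for some $k\le \li{b_0}+\li{b_1}$, which is exactly your ``pattern'' data realized by the lexicographically least representative, and then asserts the covering equality that you spell out in more detail.

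One imprecision is worth correcting. Your key step asserts that for \emph{any} two same-pattern pairs $(T_0,T_1)$ and $(T_0',T_1')$ the order isomorphism $T_0\cup T_1\to T_0'\cup T_1'$ extends to an element of $\Inc$, and hence that ``the diagonal $\Inc$-orbit is determined by the pattern''. Neither holds as stated: if $T_0\cup T_1=\{1,3\}$ and $T_0'\cup T_1'=\{1,2\}$ there is no $\pi\in\Inc$ with $\pi(3)=2$, and more generally orbits under a monoid action need not coincide even when one is contained in the other (your parenthetical ``between any two naturals \ldots\ there is room'' is exactly where this breaks). What you actually need---and what your argument correctly provides once you pick the representative with $S_0\cup S_1=\{1,\ldots,k\}$---is only the one-directional statement: the order isomorphism from the initial segment $\{1,\ldots,k\}$ onto an arbitrary $T_0\cup T_1$ extends to an element of $\Inc$ by sending $k+1,k+2,\ldots$ above $\max(T_0\cup T_1)$, with no interpolation required. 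That shows every pair $(\sigma_0 b_0,\sigma_1 b_1)$ lies in the orbit of its canonical representative, which is all the lemma requires.
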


\begin{proof}
Consider all pairs $(S_0,S_1)$ of sets $S_0,S_1 \subseteq \NN$
with $|S_i|=\li{b_i}$ for which $S_0 \cup S_1$ is an interval
of the form $\{1,\ldots,k\}$ for some $k$, which is then at most
$\li{b_0}+\li{b_1}$. Note that there are only finitely many such pairs
$(S_0,S_1)$. For each such pair let $(\pi_0,\pi_1)$ be a pair of elements
of $\Inc$ such that $\pi_i$ maps $\{1,\ldots,\li{b_i}\}$ onto $S_i$;
it is irrelevant how $\pi$ acts on the rest of $\NN$. Then we have
\[ \Inc b_0 \times \Inc b_1 = \bigcup_{(S_0,S_1)} \Inc(\pi_0 b_0,\pi_1
b_1),
\]
where the union is over all pairs $(S_0,S_1)$ as above.
\end{proof}

\begin{proof}[Computational proof of Theorem~\ref{thm:2factorGB}] 
The $42$ polynomials of $B$ were constructed by computing a Gr\"obner
basis for $J_9(\QQ)$ with {\tt Singular} \cite{GPS05} and retaining only those
polynomials $p$ for which the set of indices occurring in their variables
form an interval of the form $\{1,\ldots,k\}$ with $k \leq 9$. All
elements of $B$ are monic and have integral coefficients (in fact, equal
to $\pm 1$ except for the $3 \times 3$-minor with largest index $3$, which
has a coefficient $2$). By the equivariant Buchberger criterion and the
proof of Lemma~\ref{lm:FiniteS}, we need only $\Inc$-reduce modulo $B$
all S-polynomials of pairs $(\pi_0b_0,\pi_1b_1)$ with $b_0,b_1 \in B$
and $\pi_i:\{1,\ldots,\li{b_i}\} \to \NN$ increasing and such that $\im
\pi_0 \cup \im \pi_1=\{1,\ldots,k\}$ for some $k$. For instance, for
$b_0=b_1=b$ equal to the polynomial in $B$ with largest index $9$, we
having to $\Inc$-reduce $S(\pi_0b,\pi_1b)$ modulo $B$ for all increasing
maps $\pi_0,\pi_1:\{1,\ldots,9\} \to \{1,\ldots,18\}$ whose image union is
an interval $\{1,\ldots,k\}$. However, if $k=17$ or $k=18$, then $\pi_0b$
and $\pi_1b$ turn out to have leading monomials with gcd $1$, so these
cases can be skipped. This reduces the theorem to a finite computation
involving polynomials with largest indices up to $16$, which we have
implemented directly in {\tt C}. Finally, to deduce the result for all base
fields---and to speed up the computation---we used the
following argument. Since $\Inc B \cap K[y_{ij} \mid 1 \leq j \leq i \leq
n]$ is a subset of the ideal of $3 \times 3$-minors, it is a Gr\"obner
basis if and only if the ideal generated by $\lmon(B)$ has the same
Hilbert series as the ideal generated by $3 \times 3$-minors. Since this
Hilbert series is known and does not depend on the field~\cite{Conca94},
we may do all our computations over one field and conclude that it holds
over all fields. We have verified the equivariant Buchberger criterion
over $\FF_2$, which made the computation slightly faster than working
over $\QQ$.
\end{proof}

\comment{
\bibliographystyle{plain}
\bibliography{draismajournal,diffeq}
}

\comment{
\section*{Appendix: the basis $B$}

Below is the complete equivariant Gr\"obner basis of Theorem
\ref{thm:2factorGB}. To distinguish the diagonal entries $y_{ii}$
from the off-diagonal entries, we have denoted them $a_i$. We precede
the polynomials by graphs representing their leading monomials; here
the variable $y_{ij}$ is depicted as an undirected edge between $i$ and
$j$. For larger indices, the edges have been given different shades; this
is only to make the pictures more readable. Ideally, one would hope to
prove Theorem~\ref{thm:2factorGB} by hand by giving a bijection between
the standard monomials relative to $B$ and the known standard monomials
relative to the Gr\"obner basis of~\cite{Conca94}, but we have not yet
found such a bijection so far.

\subsection*{Largest index $3$}
\ \\
\includegraphics[scale=.5]{picturesn3.pdf}
{\small
\input{
GBrank2n3basic.tex
}
}
\subsection*{Largest index $4$}
\ \\
\includegraphics[scale=.5]{picturesn4.pdf}
{\small 
\input{
GBrank2n4basic.tex
}
}
\subsection*{Largest index $5$, degree $3$}
\ \\
\includegraphics[scale=.5]{picturesn5a.pdf}
{\small
\input{
GBrank2n5basica.tex
}
}

\subsection*{Largest index $5$, degree $5$}
\ \\
\includegraphics[scale=.5]{picturesn5b.pdf}
{\small
\input{
GBrank2n5basicb.tex
}
}

\subsection*{Largest index $6$, degree $3$}
\ \\
\includegraphics[scale=.5]{picturesn6a.pdf}
{\small
\input{
GBrank2n6basica.tex
}
}

\subsection*{Largest index $6$, degree $5$}
\ \\
\includegraphics[scale=.5]{picturesn6b.pdf}
{\small
\input{
GBrank2n6basicb.tex
}
}

\subsection*{Largest index $7$, first half}
\ \\
\includegraphics[scale=.5]{picturesn7a.pdf}
{\small
\input{
GBrank2n7basica.tex
}
}

\subsection*{Largest index $7$, second half}
\ \\
\includegraphics[scale=.5]{picturesn7a.pdf}
{\small
\input{
GBrank2n7basica.tex
}
}

\subsection*{Largest index $8$}
\ \\
\includegraphics[scale=.5]{picturesn8.pdf}
{\small
\input{
GBrank2n8basic.tex
}
}
\subsection*{Largest index $9$}
\ \\
\includegraphics[scale=.5]{picturesn9.pdf}
{\small
\input{
GBrank2n9basic.tex
}
}
}

\end{document}